\documentclass[12pt]{amsart}
\usepackage{graphicx}
\usepackage{amsthm}
\usepackage{tikz}
\usepackage{mathrsfs}
\usepackage{url}
\usetikzlibrary{calc,decorations.markings}

\makeatletter
\@namedef{subjclassname@2010}{%
  \textup{2010} Mathematics Subject Classification}
\makeatother

\newtheorem{thm}{Theorem}[section]

\newtheorem{defi}{Definition}[section]
\newtheorem{rmk}{Remark}[section]

\newtheorem{note}{Note}[section]

\newtheorem{asump}{Assumptions}[section]

\def\A{\mathcal{A}}
\def\half{\frac{1}{2}}
\def\Re{\mathrm{Re}}

\def\res{\mathrm{Res}}
\def\Co{\mathscr{C}}

\def\R{\mathbb{R}}
\def\N{\mathbb{N}}

\def\d{\mathrm{d}}

\def\A{\mathcal{A}}
\def\M{\mathcal{M}}

\def\twopi{\frac{1}{2\pi i}}
\def\B{\mathfrak B}
\def\m{\mathfrak{m}}


\frenchspacing

\textwidth=13.5cm
\textheight=23cm
\parindent=16pt
\oddsidemargin=-0.5cm
\evensidemargin=-0.5cm
\topmargin=-0.5cm




\begin{document}


\baselineskip=17pt


\title[Measure theoretic aspects of error terms]{Measure theoretic aspects of oscillations of error terms }

\author[K. Mahatab]{Kamalakshya Mahatab}
\address{Institute of Mathematical Sciences, HBNI, 
CIT Campus, Taramani, Chennai 600113, India}
\address{NTNU, Trondheim 7491, Norway}
\email[K. Mahatab]{accessing.infinity@gmail.com}
\author[A. Mukhopadhyay]{Anirban Mukhopadhyay}
\address{Institute of Mathematical Sciences, HBNI, 
CIT Campus, Taramani, Chennai 600113, India}
\email[A. Mukhopadhyay]{anirban@imsc.res.in}

\date{}

\begin{abstract}
We consider fluctuations of error terms $\Delta(x)$ appearing in the asymptotic formula
for a summatory function of coefficients of the Dirichlet series. 
These are quantified via $\Omega$ and $\Omega_{\pm}$ estimates. We obtain $\Omega$ 
bounds for Lebesgue measure of the sets   
\[\{T\leq x \leq 2T: \Delta(x)>\lambda x^{\alpha}\} \text{ and } 
\{T\leq x \leq 2T: \Delta(x)< -\lambda x^{\alpha}\}\]
for some $\alpha, \lambda>0$.
Primary aim of this article is to develop a general framework to approach these problems. 
We rediscover several classical results in general setting with weak assumptions.
Moreover, several applications of these
methods have been discussed and new results have been obtained for some Dirichlet series. 
\end{abstract}

\subjclass[2010]{11N37, 11M06}

\keywords{Omega estimates, Dirichlet Series, Mellin Transform}

\maketitle


\section{Introduction}
 Analysis of error terms in asymptotic formulas is of considerable importance in various fields of mathematics. 
 For example, consider the von-Mangoldt function 
 \[\Lambda(n)=\begin{cases}
                   \log p \quad &\mbox{ if } \ n=p^r, \ r\in \mathbb{N}, \text{ and } p \text{ prime,}\\
                   0 & \mbox{ otherwise .}
                   \end{cases}
 \]
The Prime Number Theorem says that
 \[\sum_{n\leq x}\Lambda(n)=x + \Delta(x),\]
where $\Delta(x)$ is $o(x)$. It is also known that the famous Riemann Hypothesis is equivalent to 
(see \cite{PNT_Under_RH}, also Theorem~\ref{thm:landu_omegapm} below )
\begin{equation*}
 \Delta(x)=O\left(x^{\half}\log^2 x\right).
\end{equation*}
The following result, proved by 
Hardy and Littlewood \cite{HardyLittlewoodPNTOmegapm}, shows that such an upper bound for 
$\Delta(x)$ is optimal in terms of the power of $x$:
 \begin{align*}
 \limsup \frac{\Delta(x)}{x^\half\log\log\log x} > 0 \quad \text{ and } 
 \quad \liminf \frac{\Delta(x)}{x^\half\log\log\log x} < 0.
\end{align*}
A weaker result by Landau \cite{Landau} gives
\begin{align*}
 \limsup \frac{\Delta(x)}{x^\half} > 0 \quad \text{ and } \quad \liminf \frac{\Delta(x)}{x^\half} < 0.
\end{align*}
However, Landau's method has wide applications, and it is flexible to give some measure theoretic results.
This method crucially depends on existence of a complex pole of $\zeta'(s)/\zeta(s)$
with real part $\half$.
In this paper, we shall investigate on a 
quantitative version of Landau's result by obtaining the Lebesgue measure of the sets 
where $\Delta(x)>\lambda x^{1/2}$ and 
$\Delta(x)<-\lambda x^{\half}$, for some $\lambda>0$. 
\subsection*{Outline}
In general, consider a sequence of real numbers $\{a_n\}_{n=1}^{\infty}$ having Dirichlet series
\begin{equation}\label{def:D}
 D(s)=\sum_{n=1}^{\infty}\frac{a_n}{n^s}.
\end{equation}
that converges in some half-plane. 
The Perron summation formula \cite[II.2.1]{TenenAnPr} uses analytic properties of $D(s)$ to give
\begin{equation}\label{def:M}
\sum^*_{n\leq x}a_n=\M(x)+\Delta(x), 
\end{equation}
where $\M(x)$ is the main term, $\Delta(x)$ is the error term ( which would be specified later ) 
and $\sum^*$ is defined as 
\begin{equation*}
\sum^*_{n\leq x} a_n =
\begin{cases}
\sum_{n\leq x} a_n \ & \text{if } x\notin \mathbb N \\
\sum_{n< x} a_n +\half a_x  \ &\text{if } x\in\mathbb N.
\end{cases}
\end{equation*}

In this paper, we obtain $\Omega$ and $\Omega_{\pm}$ results to measure fluctuations 
of $\Delta(x)$. We also obtain $\Omega$ bounds for Lebesgue measure of the sets on which 
these fluctuations occur. This approach requires meromorphic continuation of the 
Mellin transform $A(s)$ of $\Delta(x)$ 
which is defined as:
 \[A(s)=\int_{1}^{\infty}\frac{\Delta(x)}{x^{s+1}}\d x\]
for a complex variable $s$.
In general, $A(s)$ is holomorphic in some half plane.
In Section~\ref{sec:analytic_continuation}, we shall discuss meromorphic continuation of $A(s)$. 

In Section \ref{sec:preliminaries}, we revisit Landau's method and obtain some measure theoretic results.
If $A(s)$ has a singularity at $\sigma_0+it_0$ for some $t_0\neq 0$, and has no singularity on real line
for $\Re(s)\ge \sigma_0$,
then Landau's method gives 
\[ \Delta(x)=\Omega_{\pm} (x^{\sigma_0}). \]
A measure theoretic version of Landau's method was first used by Kaczorowski and Szyd\l o \cite{KaczMeasure} on $E_2(x)$, where
\[ \int_0^x \left|\zeta\left(\half+it \right)\right|^4 \d t = x P(\log x) + E_2(x)\]
and $P$ being a certain polynomial of degree $4$. Ivi\'c and Motohashi \cite{Ivic_moto} proved that 
\[ E_2(x) \ll x^{2/3}(\log x)^c \ \text{ for some }\ c>0, \]
and further in \cite{Motohashi2} Motohashi showed that 
\[ E_2(x)=\Omega_{\pm}(\sqrt x). \]
The result of Kaczorowski and Szyd\l o mentioned above says that 
there exist constants $\lambda_0, \nu>0$ such that 
\begin{align*}
 &&\m\{1\le x\le T: E_2(x)>\lambda_0\sqrt x \} = \Omega(T/{(\log T)^{\nu}})& \\
 &\text{ and }&\m\{1\le x\le T: E_2(x)<-\lambda_0\sqrt x \} = \Omega(T/{(\log T)^{\nu}})& \ 
 \text{ as } T\rightarrow \infty,
\end{align*}
and where $\m$ is the Lebesgue measure 
\footnote{Throughout this paper, $\m$ will denote the Lebesgue measure on 
the real line $\mathbb R$.}.
These results not only prove $\Omega_{\pm}$ bounds, 
but also give quantitative estimates for the occurrences of such fluctuations. 
Let
\begin{align*}
&\sum_{n\leq x} G_k(n) = \frac{x^k}{k!} -k \sum_\rho \frac{x^{k-1+\rho}}{\rho(1+\rho)\cdots(k-1+\rho)} 
+ \Delta_k(x), 
\end{align*}
where the Goldbach numbers $G_k(n)$ are defined as
\[\quad G_k(n)=\sum_{\substack{n_1,\ldots n_k \\ n_1+\cdots+n_k=n}}\Lambda(n_1)\cdots\Lambda(n_k),\]
and $\rho$ runs over nontrivial zeros of the Riemann zeta function $\zeta(s)$.
Bhowmik, Ramar\'e and Schlage-Puchta proved that under Riemann Hypothesis
\begin{align*}
&&\m\{T\le x\le 2T: \Delta_k(x)>(\mathfrak c_k + \mathfrak c_k')x^{k-1} \} 
= \Omega(T/{(\log T)^{c}}) \\
&\text{ and }&\m\{T\le x\le 2T: \Delta_k(x)<(\mathfrak c_k-\mathfrak c_k') x^{k-1} \} 
= \Omega(T/{(\log T)^{c}}),
\end{align*}
where $c$ is a computable positive constant, $k\geq 2$, $\mathfrak{c}_k$ and $\mathfrak{c}_k'$ are real numbers that depend on $k$ and $\mathfrak{c}_k'>0$.
In this paper, we obtain analogous results for other functions. 
In Theorem~\ref{thm:omega_pm_main}, we further generalize 
this method to have more applications. 

\subsection*{Applications}
We conclude the introduction to this paper by mentioning a few applications. 
\subsubsection*{\textbf{ Twisted Divisor Function}}
For a fixed $\theta\neq 0$,  we consider 
\begin{equation}\label{eq:tau-n-theta_def}
 \tau(n, \theta)=\sum_{d\mid n}d^{i\theta}\ .
\end{equation}
This function is used in \cite[Chapter 4]{DivisorsHallTenen} to measure the clustering of divisors.
The Dirichlet series of $|\tau(n, \theta)|^2$ can be expressed in terms of the Riemann zeta function as 
\begin{equation}\label{eq:dirichlet_series_tauntheta}
 D(s)=\sum_{n=1}^{\infty}\frac{|\tau(n, \theta)|^2}{n^s}=\frac{\zeta^2(s)\zeta(s+i\theta)\zeta(s-i\theta)}{\zeta(2s)}
 \quad\quad  \text{for}\quad \Re(s)>1.
\end{equation}
In \cite[Theorem 33]{DivisorsHallTenen}, Hall and Tenenbaum proved that
\begin{equation}\label{eq:formmula_tau_ntheta}
 \sum_{n\leq x}^*|\tau(n, \theta)|^2=\omega_1(\theta)x\log x + \omega_2(\theta)x\cos(\theta\log x)
+\omega_3(\theta)x + \Delta(x),
\end{equation}
where $\omega_i(\theta)$s are explicit constants depending only on $\theta$ and 
\begin{equation}\label{eq:upper_bound_delta}
 \Delta(x)=O_\theta(x^{1/2}\log^6x).
\end{equation}
Here the main term comes from the residues of  $D(s)$ at $s=1, 1\pm i\theta $.
All other poles of $D(s)$ come from the zeros of $\zeta(2s)$. Using a pole on the line $\Re(s)=1/4$, 
Landau's method gives
\[\Delta(x)=\Omega_{\pm}(x^{1/4}).\]
In order to apply the method of Kaczorowski and Szyd\l o, we need
\[ \int_T^{2T} \Delta^2(x) \d x \ll T^{2\sigma_0+1+\epsilon}, \]
for any $\epsilon >0$ and  $\sigma_0=1/4$; however, such an estimate is not possible \cite[V.4]{thesis1}.
Generalization of this method in Theorem~\ref{thm:omega_pm_main} can be applied to get
\begin{align*}
 \m \left( \A_j\cap [T, 2T]\right)=\Omega\left(T^{1/2}(\log T)^{-12}\right) \quad \text{ for } j=1, 2,
\end{align*}
and here $\A_j$s' for $\Delta(x)$ are defined as
\begin{align*}
 &&\A_1&=\left\{x: \Delta(x)>(\lambda(\theta)-\epsilon)x^{1/4}\right\}&\\
 &\text{and}&\A_2&=\left\{ x : \Delta(x)<(-\lambda(\theta)+\epsilon)x^{1/4}\right\},&\\
\end{align*}
for any $\epsilon>0$ and $\lambda(\theta)>0$ as in (\ref{eqn:lambda_theta}). But under Riemann Hypothesis, the above $\Omega$ bounds can be improved to
\begin{align*}
\m\left(\A_j\right) =\Omega\left(T^{3/4-\epsilon}\right),\quad \text{ for } j=1, 2
\end{align*}
and for any $\epsilon>0$.
\subsubsection*{\textbf{ The M\"obius Function }}
The  M\"obius function $\mu$ is supported on squarefree integers, which is multiplicative and takes the value 
$-1$ on primes. Let
\[\Delta(x)=\sum_{n\leq x}^*\mu(n).\]
Under Riemann Hypothesis \cite{sound_mobius} \cite{balazard}, 
\[\Delta(x)\ll \sqrt x \exp\left((\log x)^\half(\log\log x)^{\frac{5}{2}+\epsilon}\right)\ \text{ for any }\epsilon>0.\]
The following unconditional upper bound for $\Delta(x)$ can be computed using the zero free region of $\zeta(s)$:
\[\Delta(x)\ll x \exp\left(-c(\log x)^\half(\log\log x)^{-\frac{1}{5}}\right)\ \text{ for some } c>0.\]
Assuming Riemann Hypothesis and linear independence of nontrivial zeros of the Riemann Zeta function, 
Ingham \cite{ingham_mobius} proved that 
\[\limsup \frac{\Delta(x)}{x^\half}\rightarrow\infty \text{ and } \liminf \frac{\Delta(x)}{x^\half}\rightarrow-\infty \text{ as } x\rightarrow\infty.\]
However, an unconditional proof of the Ingham's theorem is still open. Below we measure the sets where amplitude
of $\Delta(x)$ is at least of order $x^\half$.

We define
\begin{align*}
 \A_1&=\left\{x: \Delta(x)>(\lambda-\epsilon_0)x^{1/2}\right\},\\
 \A_2&=\left\{ x : \Delta(x)<(-\lambda+\epsilon_0)x^{1/2}\right\},\\
\end{align*}
where $\lambda>0$ be as in Section~\ref{subsec:mobius} and $0<\epsilon_0<\lambda_2$. 
If we assume Riemann Hypothesis, then the theorem of Kaczorowski and Szyd\l o  (see Theorem~\ref{thm:kaczorowski_correct} below)
 gives 
 \[ \m\left(\A_j\cap[T, 2T]\right)=\Omega\left(T \exp\left(-(\log x)(\log\log x)^{5+\epsilon}\right)\right) \text{ for } j=1, 2.\]
However, as an application of Theorem~\ref{thm:omega_pm_main}, we prove the following unconditional weaker bound:
\[ \m\left(\A_j\cap[T, 2T]\right)=\Omega\left(T^{1-\epsilon}\right), \]
for $j=1, 2$ and for any $\epsilon>0$.
\subsubsection*{\textbf{M\"obius Function of Beurling Primes}}
A set of generalized Beurling primes is a sequence of positive real numbers 
$\B =\{p_j\}_{j=1}^{\infty}$ such that $1<p_j\leq p_{j+1}$. The generalized Beurling integers 
${n_k}$ associated to the sequence $\B$ is a multiset of all possible finite products of the form
$p_{\nu_1}^{\alpha_1}\ldots p_{\nu_m}^{\alpha_m}$ with $\nu_j<\nu_{j+1}$ and $\alpha_j\in \N$.
Define 
$$N_\B:=\sum_{n_j\leq x}1 \ \text{ and } \ \pi_\B=\sum_{p_j\leq x} 1.$$
Beurling \cite{beurling} proved that if 
\[N_\B(x)=ax+O\left(\frac{x}{\log^\gamma x}\right) \ \text{ for } \gamma> 3/2, a>0
\text{ and as } x\rightarrow\infty,\]
then
\[\pi_\B(x)\sim\frac{x}{\log x}.\]
The proof of Beurling relies on establishing analytic properties of the Beurling zeta function $\zeta_\B(s)$ defined as follows:
\[\zeta_\B(s)=\prod_{j=1}^{\infty}\left(1-p_j^{-s}\right)^{-1}.\]
Diamond, Montgomery and Vorhauer \cite{diamond} showed that there are Beurling zeta functions having zeros 
very close to the line  $1+it$ and for which the Prime Number Theorem holds. This is counter intuitive, given that 
the nontrivial zeros of the Riemann zeta function are conjectured to lie on $\half+it$. They proved
\begin{thm}[Diamond, Montgomery and Vorhauer, \cite{diamond}]
\label{thm:diamond}
 Let $\half<\theta<1$ and $a>(4/e)(1-\theta)$ be fixed. Then there is a system of Beurling primes $\B$ such that
 \begin{itemize}
  \item[(i)] $N_\B(x)=\kappa x + O(x^\theta)$ for $\kappa>0$;
  \item[(ii)] $\zeta_\B(s)$ is analytic for $\sigma\geq \theta$, except 
  for a simple pole at $s=1$ with residue $\kappa$;  
  \item[(iii)] $\zeta_\B(s)$ has infinitely many zeros on the curve $\sigma=1-a/\log t, t\geq 2,$ and no zeros 
   to the right of this curve;
   \item[(iv)] \[\sum_{p^k\leq x} \log p = x + O(x\exp(-2\sqrt{a\log x})).\]
 \end{itemize}
\end{thm}
From now onwards, $\B$ will denote a set of generalized Beurling primes that satisfy the conditions of the above 
theorem. 
Define the M\"obius function $\mu_\B$ associated to $\B$ by imitating the standard M\"obius function. 
Let $\mu_\B$ be a multiplicative function supported on the squarefree Beurling integers such that $\mu_\B(p_j)=-1$.
Let
\[\Delta(x)=\sum_{n\leq x}^*\mu_\B(n).\]
Given a small enough $\epsilon>0$ and a postive constant $C>0$, define
 \begin{align*}
 \A_1(C, \epsilon)&=\left\{x: \Delta(x)>Cx^{1-\epsilon}\right\},\\
 \A_2(C, \epsilon)&=\left\{ x : \Delta(x)<-Cx^{1-\epsilon}\right\}.\\
\end{align*}
Using Theorem~\ref{thm:omega_pm_main},
we prove that 
\[\m\left(\A_j(C, \epsilon)\cap[T, 2T]\right)=\Omega\left(T^{1-2\epsilon}\right) \text{ for } j=1, 2.\]

\section{Mellin Transform Of The Error Term}\label{sec:analytic_continuation}
The following assumptions on $D(s), \ \M(x)$ and $\Delta(x)$ which are as in (\ref{def:D}) and (\ref{def:M}), 
holds in wide generality.
\begin{asump}\label{as:for_continuation_mellintran}
Suppose there exist real numbers $T_0, \sigma_1, \sigma_2$ satisfying $0<\sigma_1<\sigma_2$ and $T_0>0$ 
such that
\begin{enumerate}
 \item[(i)]
$D(s)$ is absolutely convergent for $\Re(s)> \sigma_2$.
 \item[(ii)]
$D(s)$ can be meromorphically continued to the half plane $\Re(s)>\sigma_1$ and is analytic on the following line segments
\begin{align*}
& \{\sigma+it:\sigma_1\leq\sigma\leq\sigma_2, t=\pm T_0\}\\ 
& \{\sigma+it:\sigma=\sigma_1, -T_0\leq t\leq T_0\}.
\end{align*}

\item[(iii)]
For $\mathcal{P}$ defined as
\[\mathcal{P}=\{\sigma+it: \sigma+it \text{ is a pole of } D(s), \sigma>\sigma_1, |t|<T_0\},\]
the main term $\M(x)$ is sum of residues of $\frac{D(s)x^s}{s}$ at poles in $\mathcal P$:
 \[\M(x)=\sum_{\rho\in \mathcal{P}}\res_{s=\rho}\left( \frac{D(s)x^s}{s}\right).\] 
 
\end{enumerate}
\end{asump}

\begin{note}\label{note:initial_assumption_consequences}
We may also observe:
\begin{enumerate}
\item[(i)] The set $\mathcal{P}$ is finite.
\item[(ii)]  For any $\epsilon>0$, we have
  \[|a_n|, |\M(x)|, |\Delta(x)|, \left|\sum_{n\leq x}a_n\right| \ll x^{\sigma_2+\epsilon}. \]
\item[(iii)] The main term $\M(x)$ is a polynomial in $x$, and $\log x$:
  \[\M(x)=\sum_{j\in\mathscr{J}}\nu_{1, j}x^{\nu_{2, j}}(\log x)^{\nu_{3, j}},\]
  where $\nu_{1, j}$ are complex numbers, $\nu_{2, j}$ are real numbers with 
  $\sigma_1<\nu_{2, j}\leq \sigma_2$, $\nu_{3, j}$ are positive integers, 
  and $\mathscr J$ is a finite index set.
\end{enumerate}
\end{note}
\noindent
Meromorphic continuation of $A(s)$ has been obtained in special cases by many authors (see \cite{AnderOsci}).
We put this into a general setup in the following theorem. We skip the proof as it is routine and follows 
from expressing $A(s)$ as a contour integration involving $D(s)$ which is available from Perron
summation formula. For a detailed proof see \cite[II.2]{thesis1}. First, we define our required contour $\Co$.

\begin{defi}\label{def:contour}
Let $\sigma_1, \sigma_2$ and $T_0$ be as defined in Assumptions~\ref{as:for_continuation_mellintran}.
Choose a positive real number  $\sigma_3$ such that
$\sigma_3>\sigma_2.$
  We define the contour $\mathscr{C}$, as in Figure~1, as the union of the following five
  line segments: 
  \[\mathscr{C}=L_1\cup L_2\cup L_3 \cup L_4 \cup L_5 ,\]
  where
\begin{align*}
L_1=&\{\sigma_3+iv: T_0 \leq v < \infty\}, 
&L_2=\{u+iT_0: \sigma_1\leq u\leq \sigma_3 \}, \\
L_3=&\{\sigma_1+iv: -T_0 \leq v \leq T_0\}, 
&L_4=\{u-iT_0: \sigma_1\leq u\leq \sigma_3 \}, \\
L_5=&\{\sigma_3+iv: -\infty< v \leq -T_0\}. \\
\end{align*}
\end{defi}

\begin{center}
\begin{figure}\label{fg:contourc0}
 \begin{tikzpicture}[yscale=0.8]
\draw [<->][dotted] (0, -4.4)--(0, 4.4);
\node at (-0.4, 2) {$iT_0$}; 
\draw [thick] (-0.1, 2 )--(0.1,2);
\node at (-0.3, 0.3) {$0$};
\node at (-0.5, -2) {$-iT_0$};
\draw [thick] (-0.1,-2 )--(0.1, -2);
\draw [<->][dotted] (5, 0)--(-2, 0);

\draw [dotted] (2, -4.4)--(2, 4.4);
\node at (1.7, 0.3) {$\sigma_1$};
\draw [dotted] (3, -4.4)--(3, 4.4);
\node at (2.7, 0.3) {$\sigma_2$};
\draw [dotted] (4, -4.4)--(4, 4.4);
\node at (3.7, 0.3) {$\sigma_3$};

\draw [thick] [postaction={decorate, decoration={ markings,
mark= between positions 0.1 and 0.99 step 0.2 with {\arrow[line width=1.2pt]{>},}}}]
(4, -4.4)--(4, -2)--(2, -2)--(2, 2)--(4, 2)--(4, 4.4);
\node [below left] at (1.8, 1.6) {$\mathscr{C}$};
\end{tikzpicture}
\caption{Contour $\mathscr{C}$}
\end{figure}
\end{center}
In the above definition, the set of poles of $D(s)$ that lie to the right of $\Co$
is exactly the set $\mathcal{P}$.
The required analytic continuation of $A(s)$ is given as follows:
\begin{thm}\label{thm:analytic_continuation_mellin_transform}
Under the conditions in Assumptions-\ref{as:for_continuation_mellintran}, we have
\begin{equation}\label{eq:analytic_conti_mellin}
 A(s)=\twopi\int_{\Co}\frac{D(\eta)}{\eta(s-\eta)}\d\eta,
\end{equation}
when $s$ lies to the right hand side of the contour $\Co$.
\end{thm}
\section{The Oscillation Theorem Of Landau }\label{sec:preliminaries}
We begin with a criterion for functions that do not change sign.
This theorem appears in \cite{AnderOsci} and was attributed to Landau \cite{Landau}.
\begin{thm}[Landau\label{thm:landau_representation_integral}]
 Let $f(x)$ be a piecewise continuous function defined on $[1, \infty)$
 that does not change sign when $x>x_0$ for some $1<x_0<\infty$. Define 
 \[F(s):=\int_{1}^{\infty}\frac{f(x)}{x^{s+1}}\d x,\]
and assume that the above integral is absolutely convergent in some half plane.
Further, assume that we have an analytic continuation of $F(s)$ in a region containing the following part of the real line:
\[l(\sigma_0, \infty):=\{\sigma + i0: \sigma > \sigma_0\}.\]
Then the integral representing $F(s)$ is absolutely convergent for $\Re(s)>\sigma_0$, and hence $F(s)$ is an analytic function in this region. 
\end{thm}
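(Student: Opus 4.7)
The plan is to argue by contradiction. Let $\sigma^* \in (-\infty, \infty)$ denote the abscissa of absolute convergence of the integral $\int_1^\infty f(x) x^{-s-1}\,dx$; by hypothesis $\sigma^* < \infty$, and the goal is to show $\sigma^* \le \sigma_0$. Suppose instead $\sigma^* > \sigma_0$. Without loss of generality I assume $f(x) \ge 0$ for $x > x_0$ (replacing $f$ by $-f$ if needed). Decompose $F = F_1 + F_2$, where $F_1(s) = \int_1^{x_0} f(x) x^{-s-1}\,dx$ is entire (bounded integrand on a compact interval) and $F_2(s) = \int_{x_0}^\infty f(x) x^{-s-1}\,dx$ has the same abscissa of absolute convergence $\sigma^*$ as $F$.

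The next step is a geometric observation about the domain of analyticity. The integral defining $F$ is analytic on $\{\Re(s) > \sigma^*\}$, and by hypothesis $F$ extends analytically to an open region $V$ containing $l(\sigma_0, \infty)$. Hence $F$ is analytic on the open set $U := V \cup \{\Re(s) > \sigma^*\}$. Since $\sigma^* > \sigma_0$ lies in $V$, there is a disk $B(\sigma^*, r) \subset V$. I would then select $s_0 = \sigma^* + r/3$ and verify by a short computation that the distance from $s_0$ to the complement of $U$ strictly exceeds $r$ (the worst case, on the circle $|s-\sigma^*|=r$ with $\Re(s) = \sigma^*$, gives distance $r\sqrt{10}/3$). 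Therefore the Taylor expansion of $F$ at $s_0$ converges on a disk containing some real point $s_1 < \sigma^*$.

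The heart of the argument is the classical Landau swap. For $\Re(s) > \sigma^*$, differentiation under the integral yields
\[
(-1)^k F_2^{(k)}(s) \;=\; \int_{x_0}^\infty \frac{f(x)(\log x)^k}{x^{s+1}}\,dx \;\ge\; 0
\]
for every integer $k \ge 0$ and every real $s > \sigma^*$. Inserting this into the Taylor series of $F = F_1 + F_2$ at $s_0$ evaluated at $s_1 < s_0$ produces a series of non-negative terms plus the entire contribution $F_1(s_1)$; Tonelli's theorem then justifies interchanging summation and integration, and the inner series $\sum_k (s_0 - s_1)^k (\log x)^k / k!$ collapses to $x^{s_0 - s_1}$, giving
\[
F(s_1) \;=\; F_1(s_1) + \int_{x_0}^\infty \frac{f(x)}{x^{s_1+1}}\,dx.
\]
The left-hand side is finite because the Taylor series converges at $s_1$, and $F_1(s_1)$ is finite since $F_1$ is entire. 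Hence the remaining integral is finite, and by the sign condition $f \ge 0$ on $(x_0,\infty)$ this is absolute convergence at $s_1 < \sigma^*$, contradicting the definition of $\sigma^*$.

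The main obstacle I anticipate is the geometric step: because the hypothesis only provides analyticity in a region around the real ray rather than in a full half-plane to the left of $\sigma^*$, one cannot simply take $s_0$ far to the right and read off a large radius of convergence. The workaround is to choose $s_0$ very close to $\sigma^*$ and exploit the half-plane $\{\Re(s) > \sigma^*\}$ (obtained for free from absolute convergence) alongside the small disk provided by $V$, so that their union supplies a Taylor disk around $s_0$ which crosses the line $\Re(s) = \sigma^*$. A secondary care-point is peeling off $F_1$ before applying Tonelli, since $f$ may take arbitrary signs on $[1, x_0]$ and would otherwise destroy the positivity on which the interchange depends.
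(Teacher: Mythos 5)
Your proof is correct. Note that the paper itself gives no proof of this theorem: it is stated as a known result of Landau, with a citation to the literature, so there is no internal argument to compare yours against. What you have written is the standard Landau ``pseudo-singularity'' argument for Mellin transforms of eventually one-signed functions, and the three points that require care are all handled properly: (1) peeling off $F_1(s)=\int_1^{x_0}f(x)x^{-s-1}\,\d x$, which is entire because $f$ is bounded on $[1,x_0]$, so that positivity of $(-1)^kF_2^{(k)}(s_0)$ is available for the Tonelli interchange; (2) the geometric step, where your computation is right --- for $w$ with $\Re(w)\le\sigma^*$ and $|w-\sigma^*|\ge r$ one has $|w-s_0|^2\ge u^2+v^2+r^2/9\ge 10r^2/9$, so the Taylor disk at $s_0=\sigma^*+r/3$ has radius exceeding $r$ and reaches the real point $s_1=\sigma^*-2r/3$; and (3) the collapse of the inner exponential series to $x^{s_0-s_1}$, yielding absolute convergence at $s_1<\sigma^*$ and the desired contradiction. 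The only point worth making explicit in a written version is the gluing: the continuation on the region $V$ and the integral on $\{\Re(s)>\sigma^*\}$ must agree on their overlap before you may speak of a single analytic function on $U=V\cup\{\Re(s)>\sigma^*\}$; this follows from the identity theorem after shrinking $V$ to the disk $B(\sigma^*,r)$, whose intersection with the half-plane is convex and meets the real ray on which the two agree. With that remark added, the argument is complete.
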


We shall use Landau's theorem indirectly, by method of contradiction, to show the sign changes of $\Delta(x)$. 
 
Consider the Mellin transformation $A(s)$ of $\Delta(x)$. We need the following general assumptions
to apply Landau's theorem.

\begin{asump}\label{as:for_landau}
 Suppose there exists a real number $\sigma_0$
 such that $A(s)$ has the following properties:
\begin{itemize}
\item[(i)] There exists $t_0\neq 0$ such that 
 \begin{equation*}
 \lambda:=\limsup\limits_{\sigma\searrow\sigma_0}(\sigma-\sigma_0)|A(\sigma+it_0)|>0 .
 \end{equation*}
\item[(ii)] We have 
\begin{align*}
l_s&:=\limsup\limits_{\sigma\searrow\sigma_0}(\sigma-\sigma_0)A(\sigma) < \infty,\\
l_i&:= \liminf\limits_{\sigma\searrow\sigma_0}(\sigma-\sigma_0)A(\sigma) > -\infty.
\end{align*}
\item[(iii)]The limits $l_i, l_s$ and $\lambda$ satisfy
\[l_i+\lambda>0\quad \text{and}\quad l_s-\lambda<0.\]
\item[(iv)] We can analytically continue $A(s)$ in a region containing the real line
$l(\sigma_0, \infty)$.
\end{itemize} 
\end{asump}

\begin{rmk}
Assumptions~\ref{as:for_landau} (i) implies that $\sigma_0+it_0$ is a singularity of $A(s)$.
\end{rmk}
We construct the following sets for further use.
\begin{defi}\label{def:A1A2}
With $l_s, l_i$ and $\lambda$ as in Assumptions~\ref{as:for_landau}, and for an $\epsilon$ such that $
0<\epsilon<\min(\lambda+l_i, \lambda-l_s)$, define
 \begin{align*}
 &\mathcal{A}_1:=\{x:x \in [1, \infty), \Delta(x)>(l_i+\lambda-\epsilon)x^{\sigma_0}\}\\
\text{ and }\quad&\mathcal{A}_2:=\{x:x \in [1, \infty), \Delta(x)<(l_s-\lambda+\epsilon)x^{\sigma_0}\}.
\end{align*}
\end{defi}

\subsection{$\Omega_\pm$ Results}
Under Assumptions~\ref{as:for_landau} and using methods from \cite{KaczMeasure}, we can derive the following measure theoretic theorem.

\begin{thm}\label{thm:landu_omegapm}
Let the conditions in Assumptions~\ref{as:for_landau} hold. Then for any 
 real number $M>1$, we have
 \begin{align*}
 \m(\mathcal{A}_j\cap[M, \infty))>0, \ \text{ for } j=1, 2.
 \end{align*}
This implies
 \begin{align*}
\Delta(x)=\Omega_\pm(x^{\sigma_0}). 
\end{align*}
\end{thm}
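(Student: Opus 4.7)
The plan is to argue by contradiction, following the classical Landau approach (Theorem~\ref{thm:landau_representation_integral}) but in a quantitative form that pits the value of an auxiliary Mellin transform at $\sigma+it_0$ against its value on the real line. Suppose $\mu(\mathcal{A}_1\cap[M,\infty))=0$ for some $M>1$. Then
\[ f(x):=(l_i+\lambda-\epsilon)x^{\sigma_0}-\Delta(x) \]
is non-negative for almost every $x\ge M$, and its Mellin transform
\[ G(s):=\int_M^\infty\frac{f(x)}{x^{s+1}}\,\d x \]
converges absolutely for $\Re(s)>\sigma_2$ by Note~\ref{note:initial_assumption_consequences}. A direct computation gives
\[ G(s)=(l_i+\lambda-\epsilon)\frac{M^{\sigma_0-s}}{s-\sigma_0}-A(s)+\tilde H(s), \]
with $\tilde H(s):=\int_1^M \Delta(x)x^{-s-1}\,\d x$ entire.

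The second step is to exploit the non-negativity of $f$ through the elementary bound
\[ |G(\sigma+it_0)|\le \int_M^\infty f(x)x^{-\sigma-1}\,\d x=G(\sigma), \]
valid for $\sigma$ to the right of the convergence abscissa. I would then examine both sides as $\sigma\searrow\sigma_0$ after multiplication by $\sigma-\sigma_0$. On the real line, the first term of $G$ contributes $(l_i+\lambda-\epsilon)M^{\sigma_0-\sigma}\to l_i+\lambda-\epsilon$, and $(\sigma-\sigma_0)\tilde H(\sigma)\to 0$, so Assumption~\ref{as:for_landau}(ii) yields
\[ \limsup_{\sigma\searrow\sigma_0}(\sigma-\sigma_0)G(\sigma)=(l_i+\lambda-\epsilon)-l_i=\lambda-\epsilon. \]
At $s=\sigma+it_0$ the factor $(\sigma-\sigma_0)/(\sigma-\sigma_0+it_0)\to 0$, so both the pole term and the entire term contribute nothing, leaving
\[ \limsup_{\sigma\searrow\sigma_0}(\sigma-\sigma_0)|G(\sigma+it_0)|=\limsup_{\sigma\searrow\sigma_0}(\sigma-\sigma_0)|A(\sigma+it_0)|=\lambda \]
by Assumption~\ref{as:for_landau}(i). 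Combined with the previous inequality this forces $\lambda\le\lambda-\epsilon$, a contradiction, so $\mu(\mathcal{A}_1\cap[M,\infty))>0$.

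The case $j=2$ is symmetric: one sets $f_2(x):=\Delta(x)-(l_s-\lambda+\epsilon)x^{\sigma_0}$, which is non-negative almost everywhere on $[M,\infty)$ under the contrary hypothesis, and runs the identical argument; the real-line limit becomes $l_s-(l_s-\lambda+\epsilon)=\lambda-\epsilon$ while the value at $\sigma+it_0$ still forces $\lambda$. Finally, since $\mu(\mathcal{A}_j\cap[M,\infty))>0$ for every $M>1$, there exist arbitrarily large $x$ with $\Delta(x)/x^{\sigma_0}>l_i+\lambda-\epsilon>0$, and also arbitrarily large $x$ with $\Delta(x)/x^{\sigma_0}<l_s-\lambda+\epsilon<0$, yielding $\Delta(x)=\Omega_\pm(x^{\sigma_0})$.

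The main conceptual step is the extraction of the exact value $\lambda$ from $(\sigma-\sigma_0)|G(\sigma+it_0)|$ together with the observation that the pole term $M^{\sigma_0-s}/(s-\sigma_0)$ is killed by the extra factor $\sigma-\sigma_0$ off the real axis; the rest is bookkeeping once Assumption~\ref{as:for_landau}(iv) is invoked to justify the analytic continuation of $A(s)$ along the real axis so that $l_s$ and $l_i$ are well defined.
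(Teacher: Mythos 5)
Your proposal is correct, and it reaches the contradiction by a cleaner route than the paper's, so it is worth comparing the two. Both proofs run the same engine: assume $\mu(\mathcal{A}_1\cap[M,\infty))=0$, use Landau's Theorem~\ref{thm:landau_representation_integral} to push the abscissa of absolute convergence of an auxiliary Mellin transform down to $\sigma_0$, and then play the value at $\sigma+it_0$ (which must produce $\lambda$ by Assumption~\ref{as:for_landau}(i)) against the value on the real axis (which is capped at $\lambda-\epsilon$ by Assumption~\ref{as:for_landau}(ii)--(iii)). The paper implements this by splitting $g=\Delta(x)-(l_i+\lambda-\epsilon)x^{\sigma_0}$ into $g^{\pm}$, observing that $G^{+}$ becomes entire under the contradiction hypothesis, applying Landau to $G^{-}$, and then running a two-case comparison of $G^{+}(\sigma)$ versus $G^{-}(\sigma)$. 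You instead work directly with the single non-negative function $f=(l_i+\lambda-\epsilon)x^{\sigma_0}-\Delta(x)$ on $[M,\infty)$ and use the one-line inequality $|G(\sigma+it_0)|\le G(\sigma)$; this eliminates the case analysis entirely and, incidentally, sidesteps the paper's slightly delicate Case 1 bound $|G(\sigma+it_0)|\le|G(\sigma)|$, which as literally written needs the extra observation that $G^{+}(\sigma)$ is bounded. Your bookkeeping of the pole term $(l_i+\lambda-\epsilon)M^{\sigma_0-s}/(s-\sigma_0)$ — surviving on the real axis, killed by the factor $(\sigma-\sigma_0)/(\sigma-\sigma_0+it_0)$ off it — and the $j=2$ symmetry are both right.

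The one step you should make explicit rather than leave implicit in the phrase ``to the right of the convergence abscissa'': the inequality $|G(\sigma+it_0)|\le G(\sigma)$ is only available where the integral $\int_M^\infty f(x)x^{-\sigma-1}\,\d x$ converges absolutely, which a priori is $\sigma>\sigma_2$. To let $\sigma\searrow\sigma_0$ you must first apply Theorem~\ref{thm:landau_representation_integral} to $f$: it is piecewise continuous, bounded on compacta, non-negative (a.e.) on $[M,\infty)$, and $G(s)$ continues analytically to a region containing $l(\sigma_0,\infty)$ because $A(s)$ does (Assumption~\ref{as:for_landau}(iv)), $\tilde H$ is entire, and $M^{\sigma_0-s}/(s-\sigma_0)$ is analytic off $s=\sigma_0$. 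This yields absolute convergence for $\Re(s)>\sigma_0$, after which your limit computation goes through verbatim.
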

 
 \begin{proof}
We prove the theorem only for $\mathcal{A}_1$ as the other part is similar.
 
Now define the following integrals whenever they are absolutely convergent: 
 \begin{align*}
  &g(x):=\Delta(x)-(l_i+\lambda-\epsilon)x^{\sigma_0}, \quad &&G(s):=\int_{1}^{\infty}\frac{g(x)}{x^{s+1}}{\d x};\\
  &g^+(x):=\max(g(x), 0), \quad &&G^+(s):= \int_{1}^{\infty}\frac{g^+(x)}{x^{s+1}}{\d x};\\
  &g^-(x):=\max(-g(x), 0), \quad &&G^-(s):= \int_{1}^{\infty}\frac{g^-(x)}{x^{s+1}}{\d x}.
 \end{align*}
With the above notations, we have
\begin{align*}
 &g(x)=g^+(x)-g^-(x) \\
 \text{ and }\quad &G(s)=G^+(s)-G^-(s).
\end{align*}
Note that
\begin{align*}
G(s)&=A(s) - \int_1^{\infty}(l_i+\lambda-\epsilon)x^{\sigma_0-s-1}{\d x} \\
&=A(s) + \frac{l_i+\lambda-\epsilon}{\sigma_0-s}\quad \text{for }  \Re(s)> \sigma_0,
\end{align*}
where $\epsilon$ is fixed as in Definition \ref{def:A1A2}.
So $G(s)$ is analytic wherever $A(s)$ is, except possibly for a pole at $\sigma_0$.
This gives
\begin{align}\label{eq:G_lim_sigma0t0}
 \limsup\limits_{\sigma\searrow\sigma_0}(\sigma-\sigma_0)|G(\sigma+it_0)|
 =\limsup\limits_{\sigma\searrow\sigma_0}(\sigma-\sigma_0)|A(\sigma+it_0)|=\lambda.
\end{align}
We shall use the above limit to prove our theorem. 
We proceed by method of contradiction. 
Assume that there exists an $M>1$ such that 
\[\m(\mathcal{A}_1\cap[M, \infty))=0.\] 
This implies
\[G^+(s)=\int_{1}^{\infty}\frac{g^+(x)}{x^{s+1}}{\d x} = \int_{1}^{M}\frac{g^+(x)}{x^{s+1}}{\d x} \] 
is bounded for any $s$, and so is an entire function.
By Assumptions~\ref{as:for_landau}, 
$A(s)$ and $G(s)$ can be analytically continued on the line $l(\sigma_0, \infty)$.
As $G(s)$ and $G^+(s)$ are analytic on $l(\sigma_0, \infty)$, $G^-(s)$ is also analytic on $l(\sigma_0, \infty)$.
The integral for $G^-(s)$ is absolutely convergent for $\Re(s)>\sigma_3+1$, and $g^-(x)$ is a piecewise continuous function. 
This suggests that we can apply Theorem~\ref{thm:landau_representation_integral} to 
$G^-(s)$, and conclude that 
\[G^-(s)=\int_{1}^{\infty}\frac{g^-(x)}{x^{s+1}}{\d x} \]
is absolutely convergent for $\Re(s)>\sigma_0$.

From the above discussion, we summarize that the Mellin transforms of $g, g^+$ and $g^-$ converge absolutely 
for $\Re(s)>\sigma_0$. As a consequence, we see that $G(\sigma), G^+(\sigma)$ and $G^-(\sigma)$ 
are finite real numbers for $\sigma>\sigma_0$. We note that for any $t\in \R$
\[ |G^+(\sigma+it)| \le \int_1^{M}\frac{g^+(x)}{x^{\sigma+1}}\d x =O(1).\]
Thus 
$$(\sigma-\sigma_0)|G^+(\sigma+it)| \longrightarrow 0 \text{ as } \sigma \longrightarrow \sigma_0+ .$$
Observe that
\begin{align*}
(\sigma-\sigma_0)|G(\sigma+ it_0)|&\leq (\sigma-\sigma_0)G^+(\sigma)+ (\sigma-\sigma_0)G^-(\sigma)\\
&\leq 2(\sigma-\sigma_0)G^+(\sigma)- (\sigma-\sigma_0)G(\sigma)\\
&\leq 2(\sigma-\sigma_0)G^+(\sigma) -(\sigma-\sigma_0)A(\sigma) + l_i+\lambda-\epsilon.
\end{align*}
%
So we have
\begin{equation*}
 \limsup\limits_{\sigma\searrow\sigma_0}(\sigma-\sigma_0)|G(\sigma+it_0)|
 \leq - \liminf\limits_{\sigma\searrow\sigma_0}(\sigma-\sigma_0)A(\sigma)+l_i+\lambda-\epsilon 
 = \lambda-\epsilon.
\end{equation*}
This contradicts (\ref{eq:G_lim_sigma0t0}). 
Thus $\m(\mathcal{A}_1\cap[M, \infty))>0$ for any $M>1$, which completes the proof.
\end{proof}

\subsection{Measure Theoretic $\Omega_\pm$ Results}
Results in the last section shows that $\mathcal{A}_1$ and $\mathcal{A}_2$ 
are unbounded. But they do not indicate how the size of these sets $\mathcal{A}_1$ and $\mathcal{A}_2$ grow. 
An answer to this question was given by Kaczorowski and Szyd\l o in \cite[Theorem~4]{KaczMeasure}.

\begin{thm}[Kaczorowski and Szyd\l o \cite{KaczMeasure}]\label{thm:kaczorowski_correct}
Let the conditions in Assumptions~\ref{as:for_landau} hold.
Also assume that for a non-decreasing positive continuous function $h$ satisfying
\[h(x)\ll x^{\epsilon},\]
we have
\begin{equation}\label{eq:normDelta}
\int_{T}^{2T}\Delta^2(x){\d x}\ll T^{2\sigma_0 + 1}h(T).
\end{equation}
Then as $T\rightarrow \infty$,
\[\m\left( \mathcal{A}_j\cap[1, T]\right)=\Omega\left(\frac{T}{h(T)}\right)\quad \text{ for } j=1, 2.\]
\end{thm}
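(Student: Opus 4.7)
The plan is to argue by contradiction, upgrading the proof of Theorem~\ref{thm:landu_omegapm} by quantifying Case 2 via a dyadic Cauchy--Schwarz estimate powered by the $L^2$ assumption (\ref{eq:normDelta}). I only treat $\A_1$; the case $\A_2$ is symmetric.

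\textbf{Setup.} Retain the notation of the proof of Theorem~\ref{thm:landu_omegapm}: let $g(x) = \Delta(x) - (l_i + \lambda - \epsilon) x^{\sigma_0}$, $g = g^+ - g^-$ with $g^{\pm} \geq 0$, $G^{\pm}(s) = \int_1^{\infty} g^{\pm}(x) x^{-s-1}\,\d x$, and $G = G^+ - G^-$. The identity
\[G(s) = A(s) + \frac{l_i + \lambda - \epsilon}{\sigma_0 - s}\]
and the singularity limit (\ref{eq:G_lim_sigma0t0}),
\[\limsup_{\sigma \searrow \sigma_0}(\sigma - \sigma_0)\,|G(\sigma + it_0)| = \lambda > 0,\]
remain valid, and $g^+$ is supported in $\A_1$ by construction. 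Assume, toward a contradiction, that $\mu(\A_1 \cap [1,T]) = o(T/h(T))$ as $T\to\infty$.

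\textbf{The dyadic bound.} Since $g(x)^2 \ll \Delta(x)^2 + x^{2\sigma_0}$, hypothesis (\ref{eq:normDelta}) yields $\int_{2^k}^{2^{k+1}} g(x)^2\,\d x \ll 2^{k(2\sigma_0 + 1)}\bigl(h(2^k)+1\bigr)$. Cauchy--Schwarz together with $g^+ = g\cdot \mathbf{1}_{\A_1}$ then gives
\[\int_{2^k}^{2^{k+1}} \frac{g^+(x)}{x^{\sigma+1}}\,\d x \ll 2^{-k(\sigma - \sigma_0)}\,\psi_k^{1/2}, \qquad \psi_k := \frac{\mu(\A_1 \cap [2^k, 2^{k+1}])\,(h(2^k) + 1)}{2^k}.\]
The contradiction hypothesis together with monotonicity of $h$ forces $\psi_k \to 0$. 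A routine tail-averaging argument (for $\eta>0$ split the sum at some large $K$ depending on $\eta$, use $(\sigma - \sigma_0)\sum_{k \geq 0} 2^{-k(\sigma - \sigma_0)} = O(1)$, and then let $\eta \to 0$) produces
\[\lim_{\sigma \searrow \sigma_0}(\sigma - \sigma_0)\,G^+(\sigma) = 0. \qquad (\star)\]
In particular $G^+$ is absolutely convergent on $\{\Re s > \sigma_0\}$, so $G^- = G^+ - G$ is analytic on $l(\sigma_0, \infty)$, and Landau's Theorem~\ref{thm:landau_representation_integral} applied to the nonnegative $g^-$ shows that $G^-(\sigma)$ is finite for every $\sigma > \sigma_0$.

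\textbf{Closing the contradiction.} Since $g^{\pm} \geq 0$, one has $|G^{\pm}(\sigma + it_0)| \leq G^{\pm}(\sigma)$, so $|G(\sigma + it_0)| \leq G^+(\sigma) + G^-(\sigma)$. If $G(\sigma) \geq 0$, then $G^+(\sigma) \geq G^-(\sigma)$ and this is at most $2 G^+(\sigma)$, so $(\sigma - \sigma_0)|G(\sigma+it_0)| \to 0$ by $(\star)$. If $G(\sigma) < 0$, write $G^+ + G^- = 2G^+ - G$, giving
\[(\sigma - \sigma_0)\,|G(\sigma + it_0)| \leq 2(\sigma - \sigma_0)\,G^+(\sigma) - (\sigma - \sigma_0)A(\sigma) + (l_i + \lambda - \epsilon).\]
Taking $\limsup_{\sigma \searrow \sigma_0}$ and invoking $(\star)$ together with Assumption~\ref{as:for_landau}(ii) bounds the right-hand side by $0 + (-l_i) + (l_i + \lambda - \epsilon) = \lambda - \epsilon$. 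Combining the two cases,
\[\limsup_{\sigma \searrow \sigma_0}(\sigma - \sigma_0)\,|G(\sigma + it_0)| \leq \lambda - \epsilon < \lambda,\]
contradicting (\ref{eq:G_lim_sigma0t0}).

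The main obstacle I anticipate is the dyadic step: transporting the pointwise decay $\psi_k \to 0$ through the divergent series $\sum_k 2^{-k(\sigma - \sigma_0)}$ to obtain the limit $(\star)$ requires a careful tail split, and one must also verify that the remaining bookkeeping (monotonicity of $h$, the $x^{2\sigma_0}$ correction in $g^2$) does not spoil the rate. With $(\star)$ in hand, the remainder is a routine re-run of the proof of Theorem~\ref{thm:landu_omegapm}, with $G^+$ no longer required to be entire but controlled quantitatively by $(\star)$.
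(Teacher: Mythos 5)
Your proof is correct and follows essentially the same route the paper takes for its generalization, Theorem~\ref{thm:omega_pm_main} (of which this statement is the special case $h_1=h_2=h$ with the hypothesis (\ref{eq:second_moment_error}) supplied by (\ref{eq:normDelta})): negate the measure bound, run a dyadic Cauchy--Schwarz estimate to force $(\sigma-\sigma_0)G^+(\sigma)\to 0$, invoke Landau's Theorem~\ref{thm:landau_representation_integral} to control $G^-$, and contradict (\ref{eq:G_lim_sigma0t0}). Your closing step via $|G(\sigma+it_0)|\le G^+(\sigma)+G^-(\sigma)$ with the two sign cases for $G(\sigma)$ is, if anything, a slightly more careful rendering of the paper's Case~1/Case~2 inequalities, but it is the same argument in substance.
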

In \cite{gautami},  Bhowmik, Ramar\'e and Schlage-Puchta proved a more localized version of this theorem.

In the above theorem, (\ref{eq:normDelta}) is a very strong condition to hold. 
We generalize Theorem~\ref{thm:kaczorowski_correct} below, 
where we do not require the condition $h(x)\ll x^\epsilon$.

\begin{thm}\label{thm:omega_pm_main}
Let the conditions in Assumptions~\ref{as:for_landau} hold.
Assume that $A(s)$ has analytic continuation in a region containing the real line $l(\sigma_0, \infty)$.
Let $h_1$ and $h_2$ be two positive functions
such that
\begin{equation}\label{eq:second_moment_error}
\int_{[T, 2T]\cap \mathcal{A}_j}
\frac{\Delta^2(x)}{x^{2\sigma_0+1}}\d x\ll h_j(T)\quad\text{ for } j=1, 2.
\end{equation}
Then as $T \longrightarrow \infty$, 
\begin{equation}\label{eq:omega_pm_measure}
 \m(\mathcal{A}_j\cap[T, 2T])=\Omega\left(\frac{T}{h_j(T)}\right)\quad\text{ for } j=1, 2.
\end{equation}
\end{thm}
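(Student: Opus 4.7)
My plan is to adapt the contradiction scheme from the proof of Theorem \ref{thm:landu_omegapm}, but replace the hypothesis that $\mu(\mathcal{A}_1 \cap [M,\infty))=0$ with the weaker negation $\mu(\mathcal{A}_1 \cap [T,2T]) = o(T/h_1(T))$, and use the second--moment control (\ref{eq:second_moment_error}) through Cauchy--Schwarz to bound $G^+$. I treat $\mathcal{A}_1$; the case $\mathcal{A}_2$ is symmetric. Suppose, for contradiction, that the conclusion fails, so that for every $\epsilon' > 0$ there is $T_0=T_0(\epsilon')$ with
\[
\mu(\mathcal{A}_1 \cap [T, 2T]) \le \epsilon'\, T/h_1(T) \qquad (T \ge T_0).
\]
Define $g, g^{\pm}, G, G^{\pm}$ exactly as in the proof of Theorem \ref{thm:landu_omegapm}; the identity $G(s) = A(s) + (l_i + \lambda - \epsilon)/(\sigma_0 - s)$ and (\ref{eq:G_lim_sigma0t0}), namely $\limsup_{\sigma\searrow\sigma_0}(\sigma-\sigma_0)|G(\sigma+it_0)| = \lambda$, remain valid.

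The crux is to show that $\limsup_{\sigma \searrow \sigma_0}(\sigma - \sigma_0)G^+(\sigma) = 0$. Since $\epsilon < \lambda + l_i$, one has $l_i+\lambda-\epsilon > 0$, so $g^+(x) = g(x)\mathbf{1}_{\mathcal{A}_1}(x)$ with $0 < g^+(x) \le \Delta(x)$ on $\mathcal{A}_1$. Split
\[
G^+(\sigma) = \int_1^{T_0}\frac{g^+(x)}{x^{\sigma+1}}\,\d x + \sum_{k \ge k_0}\int_{\mathcal{A}_1 \cap [2^k, 2^{k+1}]}\frac{g^+(x)}{x^{\sigma+1}}\,\d x, \qquad k_0 := \lceil \log_2 T_0 \rceil.
\]
The first term is $O_{\epsilon'}(1)$ for $\sigma > \sigma_0$. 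For each dyadic block apply Cauchy--Schwarz, bounding the second-moment factor by $h_1(2^k)^{1/2}$ via (\ref{eq:second_moment_error}) and $g^+ \le \Delta$, and the measure factor by $\bigl(\epsilon' \cdot 2^k/h_1(2^k)\bigr)^{1/2} (2^k)^{-(\sigma-\sigma_0+1/2)}$ via the contradiction hypothesis. The $h_1$ weights cancel, leaving $\sqrt{\epsilon'}\, 2^{-k(\sigma - \sigma_0)}$ per block. Summing the geometric series gives
\[
G^+(\sigma) \ll_{\epsilon'} 1 + \frac{\sqrt{\epsilon'}}{\sigma - \sigma_0},
\]
and so $\limsup_{\sigma \searrow \sigma_0}(\sigma-\sigma_0) G^+(\sigma) \ll \sqrt{\epsilon'}$. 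Since $\epsilon' > 0$ is arbitrary, the $\limsup$ is $0$.

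To conclude, use the Landau--type bound $|G(\sigma + it_0)| \le G^+(\sigma) + G^-(\sigma) = 2G^+(\sigma) - G(\sigma)$, valid because $g^{\pm} \ge 0$. Taking $\limsup$ and combining with $\liminf_{\sigma\searrow\sigma_0}(\sigma - \sigma_0)G(\sigma) = l_i - (l_i+\lambda-\epsilon) = -(\lambda-\epsilon)$ yields
\[
\limsup_{\sigma \searrow \sigma_0}(\sigma - \sigma_0)|G(\sigma + it_0)| \le 2\cdot 0 + (\lambda - \epsilon) < \lambda,
\]
contradicting (\ref{eq:G_lim_sigma0t0}). The main obstacle is the Cauchy--Schwarz estimate in the middle paragraph: the weight $h_1$ in the second moment, the contradiction bound on the measure, and the exponent in the dyadic length must all align so that the $h_1$'s cancel and the geometric series sums to exact order $1/(\sigma - \sigma_0)$, precisely neutralising the $(\sigma - \sigma_0)$ factor and leaving a free small parameter $\sqrt{\epsilon'}$.
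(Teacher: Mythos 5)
Your overall strategy is the paper's: negate the conclusion, bound $G^+(\sigma)$ by a dyadic Cauchy--Schwarz argument in which the $h_1$ weights cancel, deduce $\limsup_{\sigma\searrow\sigma_0}(\sigma-\sigma_0)G^+(\sigma)=0$, and contradict $\limsup_{\sigma\searrow\sigma_0}(\sigma-\sigma_0)|G(\sigma+it_0)|=\lambda$. The middle step is carried out correctly and matches the paper's bound (\ref{eq:bound_G+}). Your endgame is in fact a little cleaner than the paper's: instead of the Case~1/Case~2 comparison of $G^+$ and $G^-$, you write $G^++G^-=2G^+-G$ and use $\liminf_{\sigma\searrow\sigma_0}(\sigma-\sigma_0)G(\sigma)=-(\lambda-\epsilon)$, which yields the bound $\lambda-\epsilon<\lambda$ directly.

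There is, however, one genuine gap. You justify $|G(\sigma+it_0)|\le G^+(\sigma)+G^-(\sigma)$ ``because $g^{\pm}\ge 0$''. Positivity alone is not enough: for $\sigma$ close to $\sigma_0$ the value $G(\sigma+it_0)$ is defined only by analytic continuation, and the triangle-inequality bound requires the integral representation $G(s)=\int_1^\infty g(x)x^{-s-1}\,\d x$ to be valid (i.e.\ absolutely convergent) at $\Re(s)=\sigma>\sigma_0$; a priori that integral converges only in a half-plane far to the right. The missing step is exactly the application of Theorem~\ref{thm:landau_representation_integral}: your estimate shows the integral for $G^+$ converges absolutely for $\Re(s)>\sigma_0$, so $G^-=G^+-G$ is analytic on $l(\sigma_0,\infty)$ (as $G$ is by hypothesis); since $g^-\ge 0$ and does not change sign, Landau's theorem then gives absolute convergence of the integral for $G^-$, hence for $G$, on $\Re(s)>\sigma_0$, and only after this does your inequality become legitimate. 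Without invoking Landau's theorem at this point the final contradiction is not established; with that step inserted, your argument closes.
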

\begin{proof}
 We prove the theorem for the measure of $\mathcal{A}_1$; the proof is similar for $\mathcal{A}_2$ .
We define $g, g^+, g^-, G, G^+$ and $G^-$, as in Theorem~\ref{thm:landu_omegapm} of
Section~\ref{sec:preliminaries}.
Assume that
\begin{equation}\label{eq:measure_contradic_A1}
\m\left(\mathcal{A}_1\cap[T, 2T]\right)=o\left(\frac{T}{h_1(T)}\right).
\end{equation}
This implies that for any $\epsilon'>0$, there exists an integer 
$k(\epsilon')>0$ such that 
\begin{equation}\label{eq:bound_from_littleo_assumption}
\frac{h_1(2^k)\m(\mathcal{A}_1\cap[2^k, 2^{k+1}])}{2^k}<\epsilon'^2,
\end{equation}
for all $k>k(\epsilon')$.
Using the above assumption, we may obtain an upper bound for $G^+(\sigma)$ as follows:
\begin{align*}
& \int_{\A_1}\frac{g^+(x)\d x}{x^{\sigma+1}}
\leq \sum_{k\geq 0}\int_{\A_1\cap[2^k, 2^{k+1}]}\frac{\Delta(x)\d x}{x^{\sigma+1}}\\
&( \text{as } \Delta(x)\geq g(x) \text{ on } \A_1 \text{ by Assumptions~\ref{as:for_landau},(iii)})\\
&\leq \sum_{k\geq 0}\left(\int_{\A_1\cap[2^k, 2^{k+1}]}\frac{\Delta^2(x)\d x}{x^{2\sigma_0+1}}\right)^{1/2}
\left(\frac{\m(\A_1\cap[2^k, 2^{k+1}])}{2^{k(2(\sigma-\sigma_0)+1)}}\right)^{1/2}\\
&\leq c_2 \sum_{k\geq 0}\left(\frac{h_1(2^k)\m(\A_1\cap[2^k, 2^{k+1}])}{2^{k(2(\sigma-\sigma_0)+1)}}\right)^{1/2}\\
&\leq c_3(\epsilon') + \epsilon' \sum_{k\geq k(\epsilon')}\frac{1}{2^{k(\sigma-\sigma_0)}}  \quad (\text{Using } 
(\ref{eq:bound_from_littleo_assumption})).\\
\end{align*}
In the above inequalities, $c_2$ and $c_3(\epsilon')$ are some positive constants, and $c_3(\epsilon')$ depends on $\epsilon'$.
We summarize the above calculation to
\begin{equation}\label{eq:bound_G+}
G^+(\sigma)\leq c_3(\epsilon') + \epsilon'\frac{2^{-(\sigma-\sigma_0)}}{(\sigma-\sigma_0)\log2} . 
\end{equation}
Therefore
\[G^+(s)=\int_1^{\infty}\frac{g^+(x)\d x}{x^{s+1}}\]
is absolutely convergent for $\Re(s)>\sigma_0$, and is analytic in this region.
But
\[G^-(s)=G(s)-G^+(s),\]
and $G$ is analytic on $l(\sigma_0, \infty)$. So $G^-$ is also analytic on $l(\sigma_0, \infty)$. 
Using Theorem~\ref{thm:landau_representation_integral}, we get 
\[ G^-(s)=\int_1^{\infty}\frac{g^-(x)\d x}{x^{s+1}}\]
is also absolutely convergent for $\Re(s)>\sigma_0$. As a consequence, we get $G(\sigma), G^+(\sigma)$ and $G^-(\sigma)$
are real numbers for $\sigma>\sigma_0$.

Now observe that
\begin{align*}
(\sigma-\sigma_0)|G(\sigma+ it_0)|&\leq 2(\sigma-\sigma_0)G^+(\sigma)- 
(\sigma-\sigma_0)G(\sigma )\\
&\leq 2(\sigma-\sigma_0)G^+(\sigma) -(\sigma-\sigma_0)A(\sigma) + l_i+\lambda-\epsilon.
\end{align*}
%
Using (\ref{eq:bound_G+}), 
\begin{align*}
&(\sigma-\sigma_0)|G(\sigma+it)|\\
&\le 2(\sigma-\sigma_0)c_3(\epsilon') + \frac{2\epsilon'}{2^{\sigma-\sigma_0}\log2}
-(\sigma-\sigma_0)A(\sigma) + l_i+\lambda-\epsilon.
\end{align*}
Choosing $\frac{2\epsilon'}{\log2}<\frac{\epsilon}{2}$, we have
\[\limsup\limits_{\sigma\searrow\sigma_0}(\sigma-\sigma_0)|G(\sigma+it_0)|<\lambda-\frac{\epsilon}{2},\]
which contradicts (\ref{eq:G_lim_sigma0t0}). Thus (\ref{eq:measure_contradic_A1}) is false, and 
\begin{equation}
\m\left( \mathcal{A}_1\cap[x, 2x)\right)=\Omega\left(\frac{T}{h_1(T)}\right).
\end{equation}
\end{proof}

The method of proof of the last theorem suggests the following.

\begin{thm}\label{coro:omega_pm_secondmoment}
Let the conditions in Assumptions~\ref{as:for_landau} hold.
 Assume that there is an 
analytic continuation of $A(s)$ in a region containing the real line $l(\sigma_0, \infty)$. Then we have
 \begin{equation}\label{eq:omega_pm_secondmoment}
 \int_{[T, 2T]\cap \A_j}\Delta^2(x)\d x = \Omega(T^{2\sigma_0 + 1}) \quad \text{ for } j=1, 2. 
 \end{equation}
\end{thm}
\begin{proof}
We shall prove this for $\A_1$, and the proof for $\A_2$ is similar.  
Note that as an important part of the proof of Theorem~\ref{thm:omega_pm_main}, we showed that the integral for $G^+(s)$ 
is absolutely convergent for $\Re(s)>\sigma_0$, by assuming (\ref{eq:omega_pm_measure}) is false. Then we got a contradiction that 
proves (\ref{eq:omega_pm_measure}). Now we proceed in a similar manner 
by assuming (\ref{eq:omega_pm_secondmoment}) is false. So we have
\begin{equation}\label{eq:second_moment_contra}
 \int_{[T, 2T]\cap \A_1}\Delta^2(x)\d x = o(T^{2\sigma_0 + 1}). 
\end{equation}
So for an arbitrarily small constant $\epsilon'$, we have 
\begin{align*}
& |G^+(s)|\leq \int_{\A_1}\frac{g^+(x)\d x}{x^{\sigma+1}}
\leq \sum_{k\geq 0}\int_{\A_1\cap[2^k, 2^{k+1}]}\frac{\Delta(x)\d x}{x^{\sigma+1}}\\
&\leq \sum_{k\geq 0}\frac{1}{2^{k(\sigma-\sigma_0)}}\left(\int_{\A_1\cap[2^k, 2^{k+1}]}\frac{\Delta^2(x)\d x}{x^{2\sigma_0+1}}\right)^{1/2}\\
&\leq c_4(\epsilon') + \epsilon'\sum_{k\geq k(\epsilon')}\frac{1}{2^{k(\sigma-\sigma_0)}}, \\
\end{align*}
where $c_4(\epsilon')$ is a positive constant depending on $\epsilon'$. From this, we obtain that $G^+(s)$ is absolutely convergent
for $\Re(s)>\sigma_0$. 
Further, arguments similar to the proof of Theorem~\ref{thm:omega_pm_main} yield a contradiction to (\ref{eq:second_moment_contra}).
\end{proof}

\subsection{Applications}
\subsubsection{The Twisted Divisor Function}
We define $\Delta(x)$ as in (\ref{eq:formmula_tau_ntheta}).
An upper bound for $\Delta(x)$ (as in (\ref{eq:upper_bound_delta})) can be computed using Perron's formula
and fourth moment estimates of $\zeta(s)$ at $\Re(s)=\frac{1}{2}$ ( see \cite[Theorem~33 ]{DivisorsHallTenen} ). 
Define a contour $\Co$ as given in Figure~\ref{fg:contour_c0_taun}:
\begin{align*}
 \Co=&\left(\frac{5}{4}-i\infty, \frac{5}{4}-i(\theta+1)\right]\cup \left[\frac{5}{4}-i(\theta+1), \frac{3}{4}-i(\theta+1)\right]\\
 &\cup \left[\frac{3}{4}-i(\theta+1), \frac{3}{4}+i(\theta+1)\right]\cup \left[\frac{3}{4}+i(\theta+1), \frac{5}{4}+i(\theta+1)\right]\\
 &\cup \left[\frac{5}{4}+i(\theta+1), \frac{5}{4}+i\infty \right).
\end{align*}
From Perron's formula,
we can show that
\[\Delta(x)=\twopi\int_{\Co}\frac{D(\eta)x^\eta}{\eta}\d \eta.\]
Using Theorem~\ref{thm:analytic_continuation_mellin_transform}, we have 
\[A(s)=\twopi\int_{1}^{\infty}\frac{\Delta(x)}{x^{s+1}}\d x=\twopi\int_{\Co}\frac{D(\eta)}{\eta(s-\eta)}\d \eta,\]
when $s$ lies to the right of the contour $\Co$. Denote the first nontrivial zero of $\zeta(s)$ with least positive imaginary part by $2s_0$, 
which is approximately
\begin{equation}\label{s0}
2s_0=\frac{1}{2}+i14.134\ldots .
\end{equation}
Define the contour $\Co(s_0)$, as in Figure~\ref{fg:contour_c0s0}, such that $s_0$ and any real number $s>1/4$ lie to the 
right of this contour. A meromorphic continuation of $A(s)$ to all $s$ that lies to the right of $\Co(s_0)$ is given by
\begin{equation}\label{eq:analytic_cont1/4}
 A(s)=\twopi\int_{\Co(s_0)}\frac{D(\eta)}{\eta(s-\eta)}\d \eta + \frac{\underset{\eta=s_0}{\res}D(\eta)}{s_0(s-s_0)}.
\end{equation}
\begin{center}
 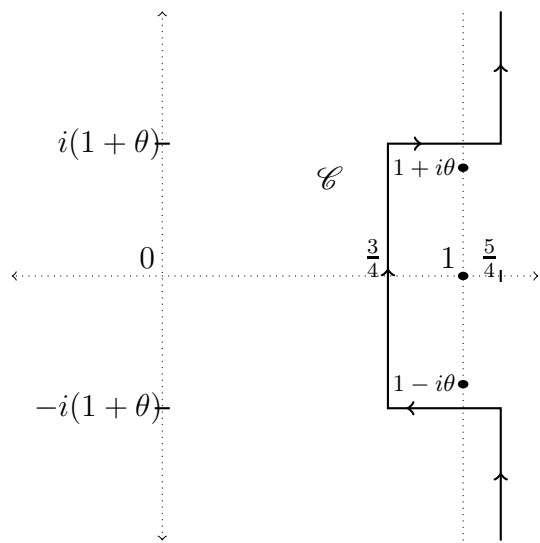
\begin{figure}
 \begin{tikzpicture}[yscale=0.8]
\draw [<->][dotted] (-1, -4.4)--(-1, 4.4);
\node at (-1.2, 0.3) {$0$};
\draw [<->][dotted] (4, 0)--(-3, 0);

\draw [dotted] (3, -4.4)--(3, 4.4);
\fill (3, 0) circle[radius=2pt];
\node at (2.8, 0.3) {$1$};

\fill (3, 1.8) circle[radius=2pt];
\node at (2.48, 1.8) {\scriptsize{$1+i\theta$}};
\fill (3, -1.8) circle[radius=2pt];
\node at (2.48, -1.8) {\scriptsize{$1-i\theta$}};
\node at (1.8, 0.3) {$\frac{3}{4}$};
\node at (3.35, 0.3) {$\frac{5}{4}$};
\draw [thick] (3.5,-0.1 )--(3.5, 0.1);
\node at (-1.85, -2.2) {$-i(1+\theta)$};
\draw [thick] (-1.1,-2.2)--(-0.9, -2.2);
\node at (-1.7, 2.2) {$i(1+\theta)$};
\draw [thick] (-1.1,2.2)--(-0.9, 2.2);
\draw [thick] [postaction={decorate, decoration={ markings,
mark= between positions 0.09 and 0.98 step 0.21 with {\arrow[line width=1pt]{>},}}}]
(3.5, -4.4)--(3.5, -2.2)--(2, -2.2)--(2, 2.2)--(3.5, 2.2)--(3.5, 4.4);
\node [below left] at (1.55, 2) {$\mathscr{C}$};
\end{tikzpicture}
\caption{Contour $\mathscr{C}$, for $D(s)=\sum_{n=1}^{\infty}\frac{|\tau(n, \theta)|^2}{n^s}$.}\label{fg:contour_c0_taun}
\end{figure}
\end{center}
From (\ref{eq:analytic_cont1/4}) we calculate the following two limits:
\begin{equation}\label{eqn:lambda_theta}
\lambda(\theta):= \lim_{\sigma\searrow0}\sigma|A(\sigma+s_0)| = |s_0|^{-1} \left|\underset{\eta=s_0}{\res}D(\eta)\right|>0,
\end{equation}
and 
\begin{equation*}
\lim_{\sigma\searrow0}\sigma A(\sigma+ 1/4)=0.
\end{equation*}
For a fixed small enough $\epsilon>0$, define
\begin{align*}
 \A_1&=\left\{x: \Delta(x)>(\lambda(\theta)-\epsilon)x^{1/4}\right\}\\
 \text{and} \quad \A_2&=\left\{ x : \Delta(x)<(-\lambda(\theta)+\epsilon)x^{1/4}\right\}.
\end{align*}
\begin{center}
 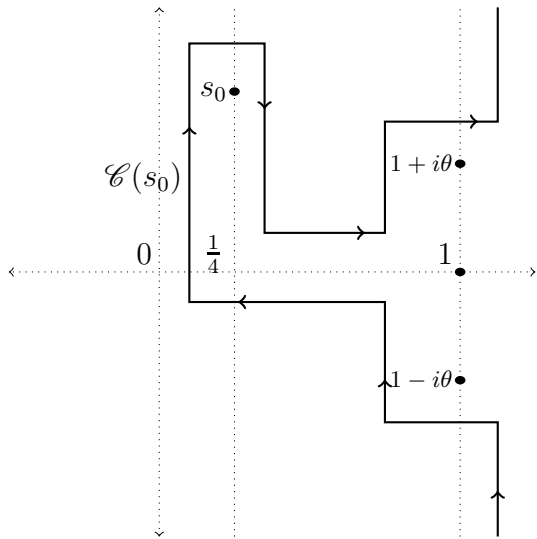
\begin{figure}
 \begin{tikzpicture}[yscale=0.8]
\draw [<->][dotted] (-1, -4.4)--(-1, 4.4);
\node at (-1.2, 0.3) {$0$};
\draw [<->][dotted] (4, 0)--(-3, 0);

\draw [dotted] (3, -4.4)--(3, 4.4);
\fill (3, 0) circle[radius=2pt];
\node at (2.8, 0.3) {$1$};

\fill (3, 1.8) circle[radius=2pt];
\node at (2.48, 1.8) {\scriptsize{$1+i\theta$}};
\fill (3, -1.8) circle[radius=2pt];
\node at (2.48, -1.8) {\scriptsize{$1-i\theta$}};
\draw [dotted] (0, -4.4)--(0, 4.4);
\node at (-0.28, 0.3) {$\frac{1}{4}$};
\fill (0, 3) circle[radius=2pt];
\node at (-0.28, 3) {$s_0$};

\draw [thick] [postaction={decorate, decoration={ markings,
mark= between positions 0.03 and 0.99 step 0.147 with {\arrow[line width=1pt]{>},}}}]
(3.5, -4.4)--(3.5, -2.5)--(2, -2.5)--(2, -0.5)--(-0.6, -0.5)--(-0.6, 3.8)--(0.4, 3.8)--(0.4, 0.65)
--(2, 0.65)--(2, 2.5)--(3.5, 2.5)--(3.5, 4.4);
\node [below left] at (-0.55, 2) {$\mathscr{C}(s_0)$};
\end{tikzpicture}
\caption{Contour $\mathscr{C}(s_0)$}\label{fg:contour_c0s0}
\end{figure}
\end{center}
Upper-bound of $\Delta$ from (\ref{eq:upper_bound_delta}) and 
Theorem~\ref{thm:omega_pm_main} give
\begin{equation}\label{result:tau_n_theta_omegapm}
 \m\left(\A_j\cap[T, 2T]\right)=\Omega\left(T^{1/2}(\log T)^{-12}\right) \text{ for } j=1, 2.
\end{equation}
Note that the above statements in particular show that
\[\Delta(x)=\Omega_\pm\left(x^{1/4}\right).\]
From Theorem~\ref{coro:omega_pm_secondmoment}, we get
\begin{equation}\label{result:tau_n_theta_secondmoment}
 \int_{\A_j\cap[T, 2T]}\Delta^2(x)\d x = \Omega\left(T^{3/2}\right) \text{ for } j=1, 2.
\end{equation}

\subsubsection{The M\"obius Function}
\label{subsec:mobius}     
Let 
 \[\Delta(x)=\sum_{n\leq x}^*\mu(n)\]
and 
\[\lambda=|s_0|^{-1} \left|\underset{\eta=s_0}{\res}\zeta^{-1}(\eta)\right|,\]
where $s_0$ is the first nontrivial zero of $\zeta(s)$.
Recall that
\begin{align*}
\A_1&=\left\{x: \Delta(x)>(\lambda-\epsilon_0)x^{1/2}\right\}\\
\text{and} \quad \A_2&=\left\{ x : \Delta(x)<(-\lambda+\epsilon_0)x^{1/2}\right\},
\end{align*}
for a fixed $\epsilon_0$ such that $0<\epsilon_0<\lambda_2$. 
We shall show
\begin{equation}\label{res:mu}
 \m\left(\A_j\cap[T, 2T]\right)=\Omega\left(T^{1-\epsilon}\right),
\end{equation}
for  $j=1, 2$ and for any $\epsilon>0$.

Here we apply Theorem~\ref{thm:omega_pm_main} in a similar way as in the previous application, so we shall skip the details. 
Under Riemann Hypothesis, Theorem~\ref{thm:kaczorowski_correct} gives
\[\m\left(\A_j\cap[T, 2T]\right)=\Omega\left(T \exp\left(-(\log x)(\log\log x)^{5+\epsilon}\right)\right) \text{ for } j=1, 2.\]
which implies (\ref{res:mu}). 

But if the Riemann Hypothesis is false, 
there exists  a constant $\mathfrak{a}$, with $1/2<\mathfrak{a}\leq1$, such that
\[\mathfrak{a}=\sup\{\sigma:\zeta(\sigma+it)=0\}.\]
Using Perron summation formula, we may show that
\[\Delta(x)\ll x^{\mathfrak{a}+\epsilon} \quad \text{ for any } \epsilon>0.\]
Also for any arbitrarily small $\delta$, we have $\mathfrak{a}-\delta<\sigma'\leq\mathfrak{a}$ such that
 $\zeta(\sigma'+it')=0$ for some real number $t'$. If $\lambda'':=|\sigma'+it'|^{-1}\left|\underset{\eta=\sigma'+it'}{\res}\zeta^{-1}(\eta)\right|$, 
 then by Theorem~\ref{thm:omega_pm_main} we get 
\begin{align*}
 \m\left(\left\{x\in[T, 2T]:\Delta(x)>(\lambda''/2)x^{\sigma'}\right\}\right)&=\Omega\left(T^{1-2\delta-2\epsilon}\right)\\
 \text{ and } \quad \m\left(\left\{x\in[T, 2T]:\Delta(x)<-(\lambda''/2)x^{\sigma'}\right\}\right)&=\Omega\left(T^{1-2\delta-2\epsilon}\right).
\end{align*}
As $\delta$ and $\epsilon$ are arbitrarily small and $\sigma'>1/2$, the above $\Omega$ bounds imply (\ref{res:mu}).

\subsubsection{M\"obius Function of Beurling Primes}
Let $\Delta(x)=\sum_{n\leq x}^*\mu_{\B}(x)$.  We may note that trivially $\Delta(x)\leq x$. Let $\epsilon$ be a fixed positive constant.
By Theorem~\ref{thm:diamond}, there exists  $t_0>0$ such that
\[\zeta_\B\left(1-\frac{a}{\log t_0}+i_0t\right)=0 \text{ and } \frac{a}{\log t_0}<\epsilon.\]
We define 
\begin{align*}
\A_1'&=\left\{x: \Delta(x)>(\lambda/2)x^{1-\frac{a}{\log t_0}}\right\}\\
\text{and} \quad \A_2'&=\left\{ x : \Delta(x)<(-\lambda/2)x^{1-\frac{a}{\log t_0}}\right\},
\end{align*}
where $\lambda=\left|1-\frac{a}{\log t_0}\right|^{-1} \left|\underset{\eta=1-a/\log t_0}{\res}\zeta^{-1}(\eta)\right|$.
From Theorem~\ref{thm:omega_pm_main},
\[\m\left(\A_j'\cap[T, 2T]\right)=\Omega\left(T^{1-\frac{2a}{\log t_0}}\right).\]
Recall
\begin{align*}
 \A_1(C, \epsilon)&=\left\{x: \Delta(x)>Cx^{1-\epsilon}\right\},\\
 \A_2(C, \epsilon)&=\left\{ x : \Delta(x)<-Cx^{1-\epsilon}\right\},\\
\end{align*}
for a constant $C>0$. Since 
\[\A_j'\subset \A_j(C, \epsilon),\]
we have
\[\m\left(\A_j(C, \epsilon)\cap[T, 2T]\right)=\Omega\left(T^{1-2\epsilon}\right).\]

\subsection*{Acknowledgment} 
We thank G. Bhowmik for initiating us on this topic and providing us with a preprint of 
\cite{gautami}. We thank J. C. Schlage-Puchta for his insightful comments. 
We thank A. Ivi\'c, O. Ramar\'e and K. Srinivas for carefully reading the manuscript and for their suggestions, which 
made the paper more readable and up to date. KM is supported by Grant 227768 of the Research Council of Norway.


\begin{thebibliography}{HD}



\normalsize
\baselineskip=17pt


\bibitem{AnderOsci}
R.~J. Anderson and H.~M. Stark,
\emph{Oscillation theorems},
Analytic number theory ({P}hiladelphia, {P}a., 1980), volume
899 of Lecture Notes in Math., pages 79--106. Springer, Berlin-New
York, 1981.

\bibitem{balazard}
M.~Balazard and A.~d. Roton,
\emph{Notes de lecture de l'article "partial sums of the möbius function" de kannan soundararajan},
arXiv:0810.3587v1 [math.NT].

\bibitem{beurling}
A.~Beurling,
\emph{ Analyse de la loi asymptotique de la distribution des nombres premiers g\'en\'eralis\'es. {I}},
Acta Math., 68(1):255--291, 1937.

\bibitem{gautami}
G.~Bhowmik, O.~Ramar{\'e}, and J.-C. Schlage-Puchta,
\emph{Tauberian oscillation theorems and the distribution of goldbach numbers},
J. Th\'eor. Nombres Bordeaux, 28(2):291--299, 2016.

\bibitem{diamond}
H.~G. Diamond, H.~L. Montgomery, and U.~M.~A. Vorhauer,
\emph{Beurling primes with large oscillation},
Math. Ann., 334(1):1--36, 2006.

\bibitem{DivisorsHallTenen}
R.~R. Hall and G.~Tenenbaum,
{\em Divisors},
Cambridge Tracts in Mathematics 90,
Cambridge University Press, Cambridge, 1988.

\bibitem{HardyLittlewoodPNTOmegapm}
G.~H. Hardy and J.~E. Littlewood,
\emph{Contributions to the theory of the riemann zeta-function and the theory of the distribution of primes},
Acta Math., 41(1):119--196, 1916.

\bibitem{ingham_mobius}
A.~E. Ingham,
\emph{On two conjectures in the theory of numbers},
Amer. J. Math., 64:313--319, 1942.

\bibitem{Ivic_moto}
A.~Ivi\'c and Y.~Motohashi,
\emph{On the fourth power moment of the {R}iemann zeta-function},
J. Number Theory, 51(1):16--45, 1995.

\bibitem{KaczMeasure}
J.~Kaczorowski and B.~Szyd{\l}o,
\emph{Some {$\Omega$}-results related to the fourth power moment of the {R}iemann zeta-function and to the additive divisor problem},
J. Th\'eor. Nombres Bordeaux, 9(1):41--50, 1997.

\bibitem{Landau}
E.~Landau,
\emph{\"{U}ber einen {S}atz von {T}schebyschef},
Math. Ann., 61(4):527--550, 1906.

\bibitem{thesis1}
K.~Mahatab,
{\em Measure Theoretic Aspects of Error Terms},
Ph.D. thesis. HBNI, 2016.

\bibitem{Motohashi2}
Y.~Motohashi,
\emph{A relation between the {R}iemann zeta-function and the hyperbolic {L}aplacian},
Ann. Scuola Norm. Sup. Pisa Cl. Sci. (4), 22(2):299--313, 1995.

\bibitem{sound_mobius}
K.~Soundararajan,
\emph{Partial sums of the {M}\"obius function},
J. Reine Angew. Math., 631:141--152, 2009.

\bibitem{TenenAnPr}
G.~Tenenbaum,
{\em Introduction to analytic and probabilistic number theory},
Cambridge Studies in Advanced Mathematics 46, Cambridge University Press, Cambridge, 1995.

\bibitem{PNT_Under_RH}
H.~von Koch,
\emph{Sur la distribution des nombres premiers},
Acta Math., 24(1):159--182, 1901.
\end{thebibliography}

\end{document}